\def\R{\mathds{R}}
\newcommand{\norm}[1]{\left\lVert#1\right\rVert}
\newcommand{\sclr}[1]{\left\langle#1\right\rangle}
\def\X{{\rm X}}
\newcommand{\sclrvX}[1]{\left\langle#1\right\rangle_{\vxLtwo}}
\def\E{\mathbb{E}}
\def\N{{\mathcal{N}_\delta}}
\def\NT{{\mathcal{N}_\delta^\bot}}
\newcommand{\prox}[1]{{\rm Prox}_{#1}}
\DeclareMathOperator*\argmin{\textrm{\textnormal{arg\,min}}}
\def\xd{{\textnormal{d}}}
\def\xCtwo{{\textnormal{C}}^{2}} 
\def\xCn#1{{\textnormal{C}}^#1}
\def\xLone{{\textnormal{L}}^{1}}
\def\xLtwo{{\textnormal{L}}^{2}} 
\def\xLinfty{{\textnormal{L}}^{\infty}} 
\def\xLn#1{{\rm L}^#1}
\def\vxLtwo{{\mathbb{L}}^{2}} 
\def\vxLinfty{{\mathbb{L}}^{\infty}} 
\def\vxLn#1{{\mathbb{L}}^#1}
\def\cons{{\rm \textbf{Cons}}}
\def\pv{{\rm \textbf{PV}}}
\def\buen{\bar{u}_{\epsilon_n}}
\def\byen{\bar{y}_{\epsilon_n}}
\def\bpen{\bar{p}_{\epsilon_n}}
\def\bmuen{\bar{\mu}_{\epsilon_n}}
\def\U{{\mathbb{U}}}
\def\vx{{\boldsymbol{x}}}
\def\vy{{\boldsymbol{y}}}
\def\vu{{\boldsymbol{u}}}
\def\vxi{{\boldsymbol{\xi}}}
\def\vz{{\boldsymbol{z}}}
\def\vlambda{{\boldsymbol{\lambda}}}
\def\vzeta{{\boldsymbol{\zeta}}}
\def\vQ{{\boldsymbol{Q}}}
\def\vPb{{\boldsymbol{P_b}}}
\def\vPm{{\boldsymbol{P_m}}}
\def\ind{{\rm i}}
\newcommand{\proj}[1]{{\rm Proj}_{#1}}
\theoremstyle{thmstyleone}%
\newtheorem{theorem}{Theorem}
\newtheorem{definition}{Definition}%
\newtheorem{problem}{Problem}
\newtheorem{assumption}{Assumption}%
\newtheorem{lemma}{Lemma}%
\newtheorem{remark}{Remark}
\begin{document}

\title[VPPHA]{Robust stochastic optimal control via variance penalization: Application to Energy Management Systems}


\author*[1]{\fnm{Paul} \sur{Malisani}}\email{paul.malisani@ifpen.fr}

\author[2]{\fnm{Adrien} \sur{Spagnol}}\email{adrien.spagnol@ifpen.fr}

\author[3]{\fnm{Vivien} \sur{Smis-Michel}}\email{vivien.smis-michel@ifpen.fr}


\affil*[1]{\orgdiv{Applied Mathematics Department}, \orgname{IFP Energies nouvelles}, \orgaddress{\street{1-4 Av. Bois Préau}, \city{Rueil-Malmaison}, \postcode{92852}, \state{Ile-de-France}, \country{France}}}

\affil[2]{\orgdiv{Applied Mathematics Department}, \orgname{IFP Energies nouvelles}, \orgaddress{\street{Rond-point de l'échangeur de Solaize}, \city{Solaize}, \postcode{69360}, \state{Rhône}, \country{France}}}

\affil[3]{\orgdiv{Control and Signal Processing Department}, \orgname{IFP Energies nouvelles}, \orgaddress{\street{Rond-point de l'échangeur de Solaize}, \city{Solaize}, \postcode{69360}, \state{Rhône}, \country{France}}}


\abstract{This paper addresses a class of robust stochastic optimal control problems. Its main contribution lies in the introduction of a general optimization model with variance penalization and an associated solution algorithm that improves out-of-sample robustness while preserving numerical complexity. The proposed variance-penalized model is inspired by a well-established machine learning practice that aims to limit overfitting and extends this idea to stochastic optimal control. Using the Douglas–Rachford splitting method, the authors develop a Variance-Penalized Progressive Hedging Algorithm (VPPHA) that retains the computational complexity of the standard PHA while achieving superior out-of-sample performance. In addition, the authors propose a three-step control framework comprising (i) a random scenario generation method, (ii) a scenario reduction algorithm, and (iii) a scenario-based optimal control computation using the VPPHA. Finally, the proposed method is validated through simulations of a stationary battery Energy Management System (EMS) using ground-truth electricity consumption and production measurements from a predominantly commercial building in Solaize, France. The results demonstrate that the proposed approach outperforms a classical Model Predictive Control (MPC) strategy, which itself performs better than the standard PHA.}

\keywords{Multistage stochastic optimization, robust optimization , Progressive Hedging Algorithm , constrained optimal control , Scenario generation , Variance Penalized Progressive Hedging Algorithm}


\pacs[MSC Classification]{90C15, 90C17, 46N10, 49M05}

\maketitle

\section{Introduction}
This paper addresses the problem of robust stochastic optimal control for convex problems and its application to energy management. The robustness of stochastic optimal control algorithms is a key concern; indeed, as highlighted in \cite{smith_optimizers_2006,esfahani_data-driven_2017}, minimizing the expectation of an uncertain cost with respect to a probability measure estimated from empirical data can yield disappointing results when evaluated on out-of-sample data. In other words, the obtained results are not necessarily better than those produced by a standard MPC strategy. As noted by \cite{esfahani_data-driven_2017,smith_optimizers_2006}, \emph{this phenomenon is termed the optimizer's curse and is reminiscent of overfitting effects observed in statistics}. \\

This issue gave rise to the so-called distributionally robust stochastic optimization framework, which consists of solving problems of the form
\begin{equation}
    \label{eq:dist_rob}
    \inf_{\vx} \sup_{\nu \in\mathcal{P}(\Omega)}\int_\Omega f(\vx(\omega), \omega)\xd \nu(\omega),
\end{equation}
where $\mathcal{P}$ is a set of probability measures referred to as the ambiguity set. This set should be large enough to contain representative distributions but small enough to prevent the optimal solution from being overly conservative. For interested readers, \cite{rahimian_distributionally_2022} provides a comprehensive review of distributionally robust stochastic optimization.\\
In the context of multistage stochastic optimization, several works have focused on the robustness of optimization algorithms. In \cite{philpott_distributionally_2018}, the authors develop a distributionally robust Stochastic Dual Dynamic Programming (SDDP) algorithm where the ambiguity set is defined as follows
\[
\mathcal{P}_{\epsilon}(\mu) := \left\{ \sum_{s=1}^S \nu^s \delta_{\xi^s}: \sum_{s} (\mu^s - \nu^s)^2 \leq \epsilon, \nu^s \geq 0,\; \sum_{s=1}^S\nu^s=1\right\},
\]
with $\mu := \sum_{s=1}^S \mu^s \delta_{\xi^s}$ a scenario-based reference discrete probability. This framework was developed for a linear cost function and linear dynamics and is not easily extendable to nonlinear-cost-problems. In \cite{glanzer_incorporating_2019}, the authors define the ambiguity set using the so-called nested Wasserstein distance for stochastic processes \cite{pflug_0,pflug_pichler} and prove a large deviation result for the nested Wasserstein distance. However, as noted in \cite{de_oliveira_risk-averse_2021,shapiro_distributionally_2022}, using the nested distance to construct the ambiguity set is difficult when the stochastic processes are not stage-wise independent. One can instead use the standard Wasserstein distance to circumvent this difficulty.\\
In \cite{de_oliveira_risk-averse_2021}, the author proposes the Scenario Decomposition with Alternating Projections (SDAP) algorithm, an adaptation of the celebrated Douglas-Rachford algorithm \cite{DR1,DR2,DR3}, to tackle this distributionally robust optimization problem. Each iteration of the SDAP consists of solving a large Quadratic Programming (QP) optimization problem and a large number of independent optimization problems. Consequently, due to the QP solving, this method is numerically more demanding than the standard PHA introduced in \cite{rockafellar_scenarios_1991}. Moreover, the SDAP only considers robustness with respect to the weights of the probability measure, but not its support, which still makes it prone to overfitting. \\
In \cite{rockafellar_solving_2018}, the author proposes an adaptation of the standard PHA to address stochastic optimization problems involving risk measures. The proposed algorithm retains nearly the same numerical complexity as the standard PHA but requires the introduction of additional optimization parameters into the underlying deterministic problems. In the context of optimal control, these parameters are often handled by adding a trivial dynamic to the system to represent them, which significantly increases the size of the resulting optimal control problem \cite[Section~2.2.1]{bonnans_odes}.\\

In the context of linear regression for machine learning, the authors of \cite{blanchet_robust_2019,blanchet_quantifying_2017} showed that solving the distributionally robust optimization problem with an ambiguity set defined by the Wasserstein distance is equivalent to adding a variance-penalization term to the loss function. Inspired by this result, we aim to improve the robustness of scenario-based stochastic optimal control problems by penalizing their variance. The first contribution of this paper is the introduction of a variance-penalized optimization model and its associated solution algorithm, the Variance-Penalized Progressive Hedging Algorithm (VPPHA), which addresses the non-separability across scenarios. The second contribution consists of the development of a data-driven stochastic optimization framework that includes a scenario-generation algorithm inspired by \cite{amabile_optimizing_2021,thorey_2018}, a scenario-reduction method from \cite{heitsch_romisch}, and a VPPHA-based stochastic rolling-horizon strategy.\\
In \Cref{sec:notations}, we introduce the mathematical notations used throughout the article. In \Cref{sec:reg_pha}, we present the variance-penalized model, the principle of the VPPHA, and its proof of convergence in the context of convex optimization. In \Cref{sec:robust_soc}, we introduce a general stochastic constrained optimal control problem for linear systems and provide a general solving algorithm based on the VPPHA and the primal-dual deterministic optimal control algorithm from \cite{malisani_interior_2023,malisani_interior_2024}. In \Cref{sec:scenario_gen_and_red}, we present a general method to generate plausible electrical power consumption and photovoltaic production data from historical records based on \cite{amabile_optimizing_2021}, as well as the scenario-tree reduction algorithm used to compute a reduced set of representative scenarios developed in \cite{heitsch_romisch}. Finally, in \Cref{sec:numerical_results}, we combine the VPPHA control algorithm, the scenario generation, and the scenario-tree reduction methods, and compare the performance in terms of electricity bill reduction of the proposed method with those of a standard MPC and a standard PHA. This comparison is conducted by simulating the proposed EMS over 2 years using ground-truth electricity production and consumption data from a predominantly commercial building equipped with solar panels, thereby illustrating the relevance of our framework.

\section{Notations}
\label{sec:notations}

\subsection{On sets and spaces}
\begin{itemize}
    \item We denote by $\R,\R_+, \R_-, \bar{\R}$ the set of real numbers, non-negative numbers, non-positive numbers, and the extended real line, respectively. 
    \item The notation $0_{E}$ denotes the zero element of the vector space $E$. 
    \item We denote by $\xLn{p}([t_1,t_2];\R^m)$ the Lebesgue space of $p$-integrable functions from $[t_1,t_2]$ to $\R^m$, and we denote by $\norm{.}_{\xLn{p}}$ its corresponding norm. We denote $\xCn{p}([t_1, t_2] ; E)$ the set of $p$-times Fr\'echet differentiable functions from $[t_1, t_2]$ to $E$. 
\end{itemize}

\subsection{On functions}
\begin{itemize}
    \item Let $\X$ be a Hilbert space,  and let $f:{\rm X}\mapsto \R$ be Fr\'echet differentiable, we denote by $\nabla f : {\rm X} \mapsto {\rm X}$ the gradient of $f$. Let $\{{\rm X}_i, i=1,\dots,n\}$ be Hilbert spaces and let $f:{\rm X}_1\times \dots \times {\rm X}_n \mapsto \bar{\R}$ be Fr\'echet differentiable, we denote by $\nabla_{x_i}f: {\rm X}_1\times \dots \times {\rm X}_n \mapsto {\rm X}_i$ the Fr\'echet derivative of $f$ with respect to to the $i^{\rm th}$ variable.
    \item Let $g:\R^n\mapsto \R^m$ be differentiable, we denote by $g':\R^n \mapsto \R^{m \times n}$ the Jacobian of $g$. Let $g:\R^{n_1} \times \R^{n_2} \mapsto \R^m$, we denote by $g'_{x_1}:\R^{n_1} \times \R^{n_2} \mapsto \R^{m\times n_1}$ (resp. $g'_{x_2}:\R^{n_1} \times \R^{n_2} \mapsto \R^{m\times n_2}$) the Jacobian of $g$ with respect to the $n_1$ first  (resp. $n_2$ last) coordinates.
    \item For a function $h$ that only depends on time $t$, we denote $h_t$ its value at time $t$, by $h_{i,t}$ the value of its $i^{\rm th}$ component of $h$ is vector-valued, and by $\dot{h}$ its derivative. We denote by $u \mapsto y[u,y^0]$ the mapping associating to $u$ the solution of the differential equations $\dot{y}=f(y,u)$ with initial condition $y(0) = y^0$.
    \item Let ${\rm E}\subset {\rm X}$ a closed set, we denote by $\ind_{\rm E}:{\rm X}\mapsto \bar{\R}$, the indicator function of ${\rm E}$, i.e. $\ind_{\rm E}(x) = 0$ if $x \in {\rm E}$ and $\ind_{\rm E}(x) = +\infty$ otherwise. In addition, we denote by ${\rm Proj}_{\rm E}:{\rm X} \mapsto {\rm E}$ the orthogonal projection onto ${\rm E}$.
\end{itemize}

\subsection{On random variables}
\begin{itemize}
    \item Let $(\Omega, \mathcal{B}, \mu)$ be a probability space. We denote by $\E$ the corresponding mathematical expectation.
    \item Random variables are denoted in bold, such as $\vx$.
    \item We denote by $\vxLn{p}$ the space of random variables from $\Omega$ to $\xLn{p}$ respectively.
    \item We endow $\vxLn{p}$, with the norm $\norm{.}_{\vxLn{p}} := \E(\norm{.}_{\xLn{p}}^p)^{\frac{1}{p}}$. The space $\mathbb{L}^\infty$ is endowed with the following norm $\norm{\vxi}_{\vxLinfty} := \inf\{y\in\R: \mu(\{\omega\in\Omega:\norm{\vxi(\omega)}_{\xLinfty}>y\})=0\}$.
    \item The space $\vxLtwo$ is endowed with the inner product $\left \langle .,. \right \rangle_{\vxLtwo} := \E ( \left \langle .,. \right \rangle_{\xLtwo})$.
    \item Throughout the article, all random variables are defined on the same discrete probability space $(\Omega, \mathcal{B}, \mu)$, where $|\Omega| = S$, $\mathcal{B}=2^\Omega$, and given a random variable $\vxi$, we denote by $\xi^s$, $s=1,\dots,S$ its realizations also called scenarios.
\end{itemize}

\section{Robust stochastic optimization via variance penalization}
\label{sec:reg_pha}
\subsection{Problem presentation}
In this section, we present the general framework of multistage stochastic optimization problems. To do so, let us introduce the following definition.
 \begin{definition}[$\delta$-adaptation]
   Let $\vxi \in \vxLtwo([0,T]; \R^d)$, and $\vx\in \vxLtwo([0,T]; \R^m)$ be two random variables and denote $(\mathcal{F}_t)_{t\in[0,T]}$ the filtration generated for almost all times by the random variables $(\vxi(t))_{t\in[0,T]}$. Let $\delta \geq 0$, we denote
   \begin{equation}
       \vx \triangleleft_\delta \vxi \Longleftrightarrow \vx(t) = \E(\vx(t) \vert \mathcal{F}_{t-\delta}), \;{\rm a.e. }\; t\in[\delta,T],
   \end{equation}
   the property of $\vx$ being $\delta$-adapted to $\vxi$. We denote 
   \begin{equation}
       \label{eq:def_Nd}
       \N := \{\vx \in \vxLtwo([0,T];\R^m) : \vx \triangleleft_\delta \vxi\},
   \end{equation}
   the linear space of $\delta$-adapted variables and we denote $\proj{\N}: \vxLtwo([0,T];\R^m) \mapsto \N$ (resp. $\proj{\NT} : \vxLtwo([0,T];\R^m) \mapsto \NT$) the orthogonal projection on $\N$ (resp. $\NT$).
 \end{definition}
 The standard stochastic optimal control problem consists of solving the following problem.
\begin{problem}
    \label{prblm:general_msop}
    Let $f$ be a convex, proper, lower semi-continuous function. The stochastic optimal control problem we are interested in can be written as 
    \begin{equation}
    \label{eq:general_msop}
    \inf_{\vx\in \vxLtwo} \E\left[ f(\vx,\vxi)\right] +{\rm i}_\N(\vx)
\end{equation}
\end{problem}
This problem can be solved efficiently using the standard PHA from \cite{rockafellar_scenarios_1991}. However, the solutions of \Cref{prblm:general_msop} generally lack out-of-sample robustness, i.e., they are too fitted to the sampled scenarios $\xi^s$, $s=1,\dots,S$. To overcome this difficulty, we introduce a new optimization model in the next Section.

\subsection{Variance penalized model and solving algorithm}
To enhance the out-of-sample robustness of the stochastic optimization model, we adopt a variance-penalization approach from machine learning to compute a control strategy less prone to overfitting. The variance-penalized optimization model is as follows.
\begin{problem}[Variance Penalized Stochastic Optimization Model]
\label{prlbm:general_stoc_multistage_problem}
Let $\vxi \in \vxLtwo([0,T];\R^d)$ be a random variable, and let $\alpha \geq 0$. The variance-penalized stochastic optimal control problem we want to solve is now the following
\begin{equation}
    \label{eq:stoch_opt_reg}
    \inf_{\vx \in \vxLtwo} \E \left( f(\vx,\vxi)\right) +\frac{\alpha}{2} \norm{\vx - \E(\vx)}^2_{\vxLtwo} + {\rm i}_\N(\vx)
\end{equation}  
\end{problem}
Because of the variance term in the cost, the non-separability across scenarios is not limited to the indicator function ${\rm i}_{\N}$(.). Consequently, the Progressive Hedging Algorithm cannot be directly applied to this problem and must be adapted to account for the variance penalization term. This adaptation is the subject of \Cref{theorem:reg_pha}.
\begin{theorem}[Variance-Penalized PHA]
    \label{theorem:reg_pha}
    Let $\vlambda^0 \in \NT$, let $r>0$, and let $\alpha \geq 0$. If $f$ is convex, proper, and lower semi-continuous, the following sequence 
    \begin{subequations}
    \label{eq:reg_pha}
    \begin{align}
        \vx^{k+1} &
            \in \argmin_{\vx \in \vxLtwo} \E(f(\vx,\vxi)) + \sclrvX{\vlambda^k, \vx} 
            + \frac{r}{2} \norm{\vx - \proj{\N}(\vz^k)}^2_{\vxLtwo}, \label{eq:reg_pha_0} \\
        \vlambda^{k+1} & : = \vlambda^k + r \proj{\NT}(\vx^{k+1}),\label{eq:reg_pha_1}\\
        \vz^{k+1} & =
            \vz^k - \vx^{k+1}+ \frac{1}{r + \alpha} \left[\alpha \E(2 \vx^{k+1} - \vz^k)
            + r \proj{\N}\left(2 \vx^{k+1} - \vz^k\right) \right],
    \end{align}
\end{subequations}
    weakly converges to a fixed-point $(\bar{\vx}, \bar{\vlambda}, \bar{\vz})$ such that $\bar{\vx}$ is an optimal solution of \Cref{prlbm:general_stoc_multistage_problem}.
\end{theorem}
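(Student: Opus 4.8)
The plan is to recognize the iteration \eqref{eq:reg_pha} as the Douglas--Rachford splitting \cite{DR1,DR2,DR3} applied to the sum $F+G$ with
\[
F(\vx):=\E(f(\vx,\vxi)),\qquad G(\vx):=\tfrac{\alpha}{2}\norm{\vx-\E(\vx)}^2_{\vxLtwo}+\ind_{\N}(\vx),
\]
and then to invoke the known weak convergence of that scheme. Here $F$ collects the part that is separable across scenarios, so that \eqref{eq:reg_pha_0} decouples into $S$ independent subproblems exactly as in the standard PHA, while $G$ gathers the two non-separable terms. All three are convex, proper and lower semicontinuous: $F$ inherits these from $f$, the variance term is a convex continuous quadratic, and $\ind_{\N}$ is the indicator of the closed linear subspace $\N$. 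For $r>0$ I write $\prox{H}^{r}(\vw):=\argmin_{\vy\in\vxLtwo}H(\vy)+\tfrac r2\norm{\vy-\vw}^2_{\vxLtwo}$.

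First I would establish the one genuinely new computation, a closed form for $\prox{G}^{r}$. The crucial structural fact is that $\vx\mapsto\E(\vx)$ is itself the orthogonal projection onto the subspace $D$ of deterministic (constant-in-$\omega$) variables, and that $D\subset\N$ since any deterministic variable is trivially $\delta$-adapted. Hence $\E$ is self-adjoint and $\E\,\proj{\N}=\proj{\N}\,\E=\E$. Minimizing $G(\vy)+\tfrac r2\norm{\vy-\vw}^2_{\vxLtwo}$ over $\vy\in\N$, splitting $\vw$ into its $\N$- and $\NT$-parts and then $\N$ into $D$ and $\N\cap D^{\perp}$, the two resulting quadratics are solved explicitly and yield
\[
\prox{G}^{r}(\vw)=\tfrac{1}{r+\alpha}\left[\alpha\,\E(\vw)+r\,\proj{\N}(\vw)\right].
\]
Two features will be used repeatedly: its range lies in $\N$, and, since $\E(\vw)=\E(\proj{\N}\vw)$, it depends on $\vw$ only through $\proj{\N}(\vw)$.

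Next I would carry out the change of variable that turns \eqref{eq:reg_pha} into a textbook Douglas--Rachford recursion. A one-line induction on \eqref{eq:reg_pha_1} shows $\vlambda^{k}\in\NT$ for all $k$, as $\vlambda^{0}\in\NT$ and $\proj{\NT}(\vx^{k+1})\in\NT$. Define $\hat{\vz}^{k}:=\proj{\N}(\vz^{k})-\tfrac1r\vlambda^{k}$. Completing the square in \eqref{eq:reg_pha_0} gives $\vx^{k+1}=\prox{F}^{r}(\hat{\vz}^{k})$. Using the dual update \eqref{eq:reg_pha_1} together with the $\vz$-update of \eqref{eq:reg_pha}, the fact that $\prox{G}^{r}$ maps into $\N$ (so its $\NT$-part vanishes and its $\N$-part is itself), that it depends only on $\proj{\N}$ of its argument, and that $\proj{\N}\vlambda^{k}=0$ (whence $\proj{\N}\vz^{k}=\proj{\N}\hat{\vz}^{k}$), I would verify the single update $\hat{\vz}^{k+1}=\hat{\vz}^{k}-\vx^{k+1}+\prox{G}^{r}(2\vx^{k+1}-\hat{\vz}^{k})$. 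Paired with $\vx^{k+1}=\prox{F}^{r}(\hat{\vz}^{k})$, this is exactly the Douglas--Rachford iteration for $F+G$.

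Finally I would conclude by the standard convergence theory. Assuming the solution set of \Cref{prlbm:general_stoc_multistage_problem} is nonempty, equivalently that the Douglas--Rachford operator admits a fixed point, the governing sequence $\hat{\vz}^{k}$ converges weakly to such a fixed point and the shadow sequence $\vx^{k}=\prox{F}^{r}(\hat{\vz}^{k-1})$ converges weakly to a minimizer $\bar{\vx}$ of $F+G$, i.e.\ to an optimal solution of \Cref{prlbm:general_stoc_multistage_problem}. Weak convergence of the full triple then follows: continuity of the orthogonal projections transfers convergence of $\hat{\vz}^{k}$ to both $\proj{\N}(\vz^{k})$ and $\vlambda^{k}$, while $\proj{\NT}(\vz^{k})=\proj{\NT}(\vz^{0})-\tfrac1r(\vlambda^{k}-\vlambda^{0})$ (a telescoping of \eqref{eq:reg_pha_1} using that the $\NT$-part of $\prox{G}^{r}$ vanishes) converges as well. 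The main obstacle is the pair of intertwined facts in the two middle paragraphs: obtaining the explicit $\prox{G}^{r}$, and, above all, showing that the $\vz^{k}$ appearing inside the proximal step of \eqref{eq:reg_pha} may be replaced by $\hat{\vz}^{k}$, since it is precisely this identification that reduces the variance-penalized scheme to an operator-splitting method whose convergence is already known.
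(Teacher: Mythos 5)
Your proposal is correct and follows essentially the same route as the paper: the same splitting into $\E(f(\cdot,\vxi))$ and the variance-plus-indicator term, the same closed-form proximal operator $\frac{1}{r+\alpha}\left[\alpha\E(\cdot)+r\proj{\N}(\cdot)\right]$, and the same identification of the scheme with Douglas--Rachford (your change of variable $\hat{\vz}^k=\proj{\N}(\vz^k)-\frac{1}{r}\vlambda^k$ is exactly the paper's invariant $\vlambda^k=-r\proj{\NT}(\vz^k)$ read in the opposite direction). The only cosmetic differences are that you compute the prox by direct orthogonal decomposition through the subspace of deterministic variables, where the paper uses Lagrangian duality, and that you are somewhat more explicit about initialization and convergence of the full triple.
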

\begin{proof}
    First, let us split \cref{eq:stoch_opt_reg} as follows
    \begin{equation}
    \label{eq:def_splitting_drs}
        \begin{cases}
            \phi_\vxi(\vx) :=  \E \left( f(\vx,\vxi)\right)\\
            \psi(\vx) := \frac{\alpha}{2} \norm{\vx - \E(\vx)}^2_{\vxLtwo} + {\rm i}_\N(\vx)
        \end{cases}
    \end{equation}
    The Douglas-Rachford solving algorithm \cite{DR1,DR2,DR3} for this problem consists of finding a fixed-point of the following iterative procedure
    \begin{subequations}
        \label{eq:DR}
        \begin{align}
            \vx^{k+1} & = \prox{\frac{1}{r}\phi_\vxi}(\vz^k) \label{eq:DR1}\\
            \vz^{k+1} & = \vz^k + \prox{\frac{1}{r}\psi}(2 \vx^{k+1} - \vz^k) - \vx^{k+1} \label{eq:DR2}
        \end{align}
    \end{subequations}
    The proof of \Cref{theorem:reg_pha} consists of proving that the scheme from  \cref{eq:reg_pha} is the Douglas-Rachford scheme from \cref{eq:DR} applied to the optimization problem from \cref{eq:stoch_opt_reg} using the splitting defined in \cref{eq:def_splitting_drs}.\\
    Now, let us compute $\prox{\frac{1}{r}\psi}$
\begin{equation*}
    \prox{\frac{1}{r}\psi}(\vz) := \argmin_{\vx\in \vxLtwo} \frac{\alpha}{2}\norm{\vx - \E(\vx)}^2_{ \vxLtwo}  + {\rm i}_{\N}(\vx)
    + \frac{r}{2} \norm{\vx - \vz}^2_{ \vxLtwo}
\end{equation*}
    Let $L:\vxLtwo \times \NT$ be the Lagrangian of this problem, defined as follows
    \[
    L(\vx, \vlambda) := \frac{r}{2} \norm{\vx - \vz}^2_{\vxLtwo} + \frac{\alpha}{2}  \norm{\vx - \E(\vx)}^2_{\vxLtwo} + \left \langle \vlambda, \proj{\NT}(\vx) \right\rangle_{\vxLtwo}
    \]
    The Lagrangien is strictly convex in $\vx$ and since $\N$ is non-empty closed linear subset of $\vxLtwo([0,T];\R^m)$, strong duality holds. Recalling that $\proj{\NT}$ is self-adjoint, a direct calculus yields that the Lagrangian's saddle-point $(\bar{\vx}, \bar{\vlambda})$ satisfies
    \begin{subequations}
        \begin{align}
            \nabla_{\vx}L(\bar{\vx}, \bar{\vlambda}) &= r(\bar{\vx} - \vz) +\alpha(\bar{\vx} - \E(\bar{\vx})) + \bar{\vlambda} = 0\label{eq:lag_vx_prox_g_part_1}\\
            \nabla_{\vlambda}L(\bar{\vx}, \bar{\vlambda}) &= \proj{\NT}(\bar{\vx}) = 0\label{eq:lag_vlambda_prox_g_part_2}
        \end{align}
    \end{subequations}
    Since $\bar{\vlambda}\in \NT$, we have $\E(\bar{\vlambda})=0$, thus taking the expectation in \cref{eq:lag_vx_prox_g_part_1} yields
    \begin{equation}
        \label{eq:lag_vlambda_prox_g_part_3}
        \E(\bar{\vx}) = \E(\vz)
    \end{equation}
    Incorporating \cref{eq:lag_vlambda_prox_g_part_2,eq:lag_vlambda_prox_g_part_3} into \cref{eq:lag_vx_prox_g_part_1} and decomposing the resulting relation into its components in $\N$ and $\NT$ gives
        \begin{align*}
            r(\bar{\vx} - \proj{\N}(\vz)) + \alpha\left(\bar{\vx} - \E(\vz)\right) &= 0\\
             \bar{\vlambda} & = r\proj{\NT}(\vz)
        \end{align*}
    Which, in turn, proves that
    \begin{equation}
        \label{eq:prox_g_reg}
         \prox{\frac{1}{r}\psi}(\vz) = \bar{\vx} = \frac{\alpha \E(\vz) + r{\rm Proj}_{\N}(\vz)}{r + \alpha}
    \end{equation}
Therefore, $\prox{\frac{1}{r}\psi}(.) \in \N$. Now, define $\vlambda^k := - r \proj{\NT}(\vz^k)$, then, using \cref{eq:DR2}, we have
\begin{align}
        \vlambda^{k+1}&= -r \proj{\NT}(\vz^k - \vx^{k+1} + \prox{\frac{1}{r}\psi}(2 \vx^{k+1} - \vz^k)) \nonumber\\
        &= -r \proj{\NT}(\vz^k) + r\proj{\NT}(\vx^{k+1}) \nonumber\\
        &= \lambda^k + r\proj{\NT}(\vx^{k+1})\label{eq:dr_recurvive_lbd_reg}
\end{align}
Now, let us compute $\prox{\frac{1}{r}\phi(.,\vxi)}$
\begin{align}
    \prox{\frac{1}{r}\phi_\vxi}(\vz^k) &=
        \argmin_{\vx\in \vxLtwo} \E(f(\vx,\vxi))
        + \frac{r}{2}\norm{\vx - \vz^k}^2_{\vxLtwo}\nonumber \\
                            & = 
        \argmin_{\vx\in \vxLtwo} \E(f(\vx,\vxi)) 
        + \frac{r}{2}\norm{\vx - \proj{\N}(\vz^k) - \proj{\NT}(\vz^k)}^2_{\vxLtwo} \nonumber\\
                            & = \begin{multlined}[t]\argmin_{\vx\in \vxLtwo} \E(f(\vx,\vxi)) - r \sclrvX{\vx, \proj{\NT}(\vz^k)}
                            + \frac{r}{2}\norm{\vx - \proj{\N}(\vz^k)}^2_{\vxLtwo}\\
                            + \frac{r}{2}\norm{\proj{\NT}(\vz^k)}^2_{\vxLtwo} \end{multlined}\nonumber\\
                            & = \argmin_{\vx\in \vxLtwo} \E(f(\vx,\vxi)) + \sclrvX{\vx,\vlambda^k}
                            + \frac{r}{2}\norm{\vx - \proj{\N}(\vz^k)}^2_{\vxLtwo} \label{eq:last_proxf_z}
\end{align}
The transition to the last line stems from noting that $\norm{\proj{\NT}(\vz^k)}^2_{\vxLtwo}$ does not depend on $\vx$, thus has no influence on the $\argmin$ and can be ignored. Finally, using \cref{eq:prox_g_reg,eq:dr_recurvive_lbd_reg,eq:last_proxf_z}, it is straightforward to check that the scheme from \cref{eq:reg_pha} is exactly the Douglas-Rachford algorithm from \cref{eq:DR} applied to \Cref{prlbm:general_stoc_multistage_problem}, which concludes the proof.
\end{proof}

\begin{remark}
One can check that the algorithm from \Cref{theorem:reg_pha} with $\alpha = 0$ is equivalent to the standard PHA from \cite{rockafellar_scenarios_1991}.
\end{remark}

\section{Robust Stochastic Optimal Control}
\label{sec:robust_soc}
\subsection{Problem presentation}
\begin{problem}[Stochastic optimal control problem]
\label{prblm:general_socp}
The problem we are interested in consists of solving the following stochastic optimal control problem
\begin{equation}
    \label{eq:cost_general_prb}
    \min_{\vu \in \U} \E\left[\int_0^T\ell(\vy_t, \vu_t, \vxi_t) \xd t+ h(\vy_T) \right]
\end{equation}
Where $\U \subseteq \vxLtwo([0,T];\R^m)$ is the space of random variables such that, for all $\vu \in \U$, the following holds
\begin{subequations}
        \begin{align}
            \dot{\vy}_t & =
        A_t \vy_t + B_t \vu_t \textrm{ a.s.} \label{eq:dyn_const}\\
        0_{\R^p} &\geq C_t \vy_t + D_t \vu_t + E_t \textrm{ a.s.}\label{eq:ineq_const}\\
        \vy_0 & = y^0 \textrm{ a.s.} \label{eq:y0_const}\\
         0_{\R^q} & = F \vy_T + G \textrm{ a.s.}  \label{eq:yT_const}\\
         \vu & \in \N \label{eq:meas_const}
        \end{align}
    \end{subequations}

\end{problem}
 In this general setting, \cref{eq:meas_const} encompasses both the Decision-Hazard and Hazard-Decision frameworks, even though this paper's application falls under the Decision-Hazard framework since $\delta>0$. Finally, the problem is solved under the following assumptions.
\begin{assumption}
 \label{ass:prob_optim}
 The data of the problem satisfy the following assumptions
 \begin{enumerate}
     \item[$i)$] The function $\ell \in \xCtwo(\R^n\times\R^m \times \R^d;\R)$ is proper, and convex with respect to the first two variables. The function $h\in \xCtwo(\R^n;\R)$ is also proper and convex.
     \item[$ii)$] There exists $R<+\infty$ such that for all $(\vy, \vu)$ satisfying \cref{eq:dyn_const,eq:ineq_const,eq:y0_const,eq:yT_const}, we have 
\begin{equation}
\norm{\vu}_{\vxLinfty}\leq R
\end{equation}
\item[$iii)$] The mappings $A, B, C, D, E$ are in $\xLinfty$. 
 \end{enumerate} 
\end{assumption}

\subsection{VPPHA implementation for \Cref{prblm:general_socp}}
In this section, we present a detailed account of VPPHA's implementation for solving \Cref{prblm:general_socp}. To do so, let us introduce the following definition
\begin{definition}
    \label{definition:BR}
    Let $\vxi \in \vxLtwo([0,T];\R^d)$, $\vlambda, \vzeta \in \vxLtwo([0,T];\R^m)$ be three random variables. We denote ${\rm SOCP}(\vxi,\vzeta,\vlambda) \in \vxLtwo([0,T];\R^m)$ the random variable defined scenario by scenario as follows 
    \begin{equation}
        \label{eq:def_br}
        {\rm SOCP}(\vxi, \vzeta, \vlambda)^s := \bar{u}^s \;\;\forall s =1,\dots,S,
    \end{equation}
    where $\bar{u}^s$ is the solution of the following deterministic optimal control problem
    \begin{equation}
         \label{eq:cost_pha_scen}
         \min_{u \in \xLtwo([0,T];\R^m)} \int_0^T \ell(y_t, u_t,\xi^s_t) \xd t
         + \sclr{\lambda^s, u}_{\xLtwo} + \frac{r}{2}\norm{u - \zeta^s}_{\xLtwo}^2 + h(y_T),
     \end{equation}
 under constraints from \cref{eq:dyn_const,eq:ineq_const,eq:y0_const,eq:yT_const}.
\end{definition}

Using this definition, we can now write the Variance-Penalized PHA for \Cref{prblm:general_socp}.

\begin{theorem}
     Let $\vxi \in \vxLtwo([0,T];\R^d)$ be a discrete random variable, let $\vlambda^0 \in \NT$, let $r>0$, let $\alpha \geq 0$, and assume that \Cref{ass:prob_optim} holds, then the following sequence
    \begin{subequations}
        \label{eq:reg_pha_ipm}
        \begin{align}
            \vu^{k+1}&:= {\rm SOCP}(\vxi, \proj{\N}(\vz^k), \vlambda^k),\label{eq:reg_pha_ipm_1}\\
            \vlambda^{k+1} & := \vlambda^k + r \proj{\NT}(\vu^{k+1}),\label{eq:reg_pha_ipm_2}\\
        \vz^{k+1} & = 
            \vz^k - \vu^{k+1}+\frac{1}{r + \alpha} \left[\alpha \E(2 \vu^{k+1} - \vz^k)
            + r \proj\N\left(2 \vu^{k+1} - \vz^k\right)\right],\label{eq:reg_pha_ipm_3}
        \end{align}
    \end{subequations}
    is the Variance-Penalized PHA described in \Cref{theorem:reg_pha} applied to \Cref{prblm:general_socp}.
\end{theorem}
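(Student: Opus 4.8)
The plan is to show that scheme \cref{eq:reg_pha_ipm} is nothing but the abstract Variance-Penalized PHA of \Cref{theorem:reg_pha} once \Cref{prblm:general_socp} is recast in the form of \Cref{prlbm:general_stoc_multistage_problem}. First I would eliminate the state from the dynamics. Since \cref{eq:dyn_const,eq:y0_const} are linear with the fixed initial condition $y^0$, the map $u \mapsto y[u,y^0]$ is affine, so I can define, scenario-wise,
\begin{equation*}
    f(u, \xi) := \int_0^T \ell(y[u,y^0]_t, u_t, \xi_t)\xd t + h(y[u,y^0]_T) + \ind_{\mathcal{C}}(u),
\end{equation*}
where $\mathcal{C}$ collects the affine constraints \cref{eq:ineq_const,eq:yT_const} on the pair $(y[u,y^0], u)$. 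With this choice, the variance-penalized objective of \Cref{prblm:general_socp} reads exactly $\E(f(\vu,\vxi)) + \frac{\alpha}{2}\norm{\vu - \E(\vu)}^2_{\vxLtwo} + \ind_\N(\vu)$, i.e., the objective of \Cref{prlbm:general_stoc_multistage_problem} with the generic variable $\vx$ replaced by the control $\vu$.

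The first substantive step is to verify that $f(\cdot, \xi)$ is convex, proper and lower semi-continuous for each realization $\xi$, so that \Cref{theorem:reg_pha} is applicable. This is where \Cref{ass:prob_optim} enters: the convexity of $\ell$ and $h$ in their state/control arguments (item $i$), composed with the affine map $u\mapsto y[u,y^0]$, yields convexity of the integral and terminal terms; the constraints \cref{eq:ineq_const,eq:yT_const} are affine in $(y,u)$ and hence in $u$, so $\mathcal{C}$ is closed and convex and $\ind_{\mathcal{C}}$ is convex, proper and lsc; finally the $\xLinfty$ regularity of the data (item $iii$) together with the uniform bound (item $ii$) guarantee that the cost is well-defined and that the admissible set is suitably behaved. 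Thus \Cref{prblm:general_socp} is a genuine instance of \Cref{prlbm:general_stoc_multistage_problem}, and since on a discrete space $\E(f(\vu,\vxi)) = \sum_{s} \mu^s f(u^s,\xi^s)$ is convex proper lsc on $\vxLtwo$ whenever each $f(\cdot,\xi^s)$ is, the hypotheses of \Cref{theorem:reg_pha} hold.

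Next I would compare the two schemes term by term. The update \cref{eq:reg_pha_ipm_2} for $\vlambda$ and \cref{eq:reg_pha_ipm_3} for $\vz$ coincide verbatim with \cref{eq:reg_pha_1} and the third line of \cref{eq:reg_pha} under the identification $\vx \leftrightarrow \vu$, so nothing is left to prove for these. The only line requiring work is the proximal-type minimization \cref{eq:reg_pha_0}, namely
\begin{equation*}
    \vu^{k+1} \in \argmin_{\vu\in\vxLtwo} \E(f(\vu,\vxi)) + \sclrvX{\vlambda^k, \vu} + \frac{r}{2}\norm{\vu - \proj{\N}(\vz^k)}^2_{\vxLtwo}.
\end{equation*}
Here I would exploit the discreteness of $(\Omega,\mathcal{B},\mu)$: writing $\E(\cdot) = \sum_{s=1}^S \mu^s(\cdot)^s$ and expanding the $\vxLtwo$ inner product and norm as the same weighted sums over scenarios, the objective becomes $\sum_{s=1}^S \mu^s \big[\, f(u^s,\xi^s) + \sclrX{(\vlambda^k)^s, u^s} + \frac{r}{2}\norm{u^s - \proj{\N}(\vz^k)^s}^2_{\xLtwo}\,\big]$, whose $s$-th summand depends only on $u^s$. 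Because the weights $\mu^s$ are strictly positive, the minimization over $\vu\in\vxLtwo$ decouples into $S$ independent scenario minimizations; substituting the definition of $f$ into the $s$-th one recovers exactly \cref{eq:cost_pha_scen} with $\zeta^s = \proj{\N}(\vz^k)^s$ and $\lambda^s = (\vlambda^k)^s$. Hence $\vu^{k+1} = {\rm SOCP}(\vxi, \proj{\N}(\vz^k), \vlambda^k)$, which is precisely \cref{eq:reg_pha_ipm_1}, and the identification is complete.

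I expect the main obstacle to lie in the second step, namely establishing rigorously that the state-eliminated cost $f$ is convex, proper and lsc so that \Cref{prblm:general_socp} genuinely fits the hypotheses of \Cref{theorem:reg_pha}; once this is granted, the separability argument reducing \cref{eq:reg_pha_0} to the per-scenario problems \cref{eq:cost_pha_scen} is a routine consequence of the finiteness of $\Omega$ and the strict positivity of the scenario probabilities.
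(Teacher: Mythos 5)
Your proposal is correct and follows essentially the same route as the paper: both eliminate the state via the affine map $u \mapsto y[u,y^0]$, fold the cost and affine constraints of \Cref{prblm:general_socp} into a single proper, convex, lower semi-continuous scenario-wise function (your $f$ is the paper's $\varphi$, with the constraints written as an indicator of a set rather than as separate indicator integrals), and then use the finiteness of the probability space to decouple the proximal step \cref{eq:reg_pha_0} into the $S$ deterministic problems \cref{eq:cost_pha_scen}, identifying the result with ${\rm SOCP}(\vxi, \proj{\N}(\vz^k), \vlambda^k)$. The only cosmetic difference is that you additionally note where items $ii)$ and $iii)$ of \Cref{ass:prob_optim} enter (well-posedness of the cost), which the paper leaves implicit.
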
\begin{proof}
To do so, we only need proving that there exists a function denoted $\varphi:\xLtwo([0,T];\R^m)\times \xLtwo([0,T];\R^d) \mapsto \bar{\R}$, proper, convex, and lower semicontinuous embedding the cost and constraints of \Cref{prblm:general_socp}, and such that \cref{eq:reg_pha_0} applied to this function gives \cref{eq:reg_pha_ipm_1}. To do so, let us define the function
$\varphi:\xLtwo([0,T];\R^m)\times \xLtwo([0,T];\R^d) \to \bar{\R}$ as follows
\begin{align}
    \varphi(u,\xi^s) := &\int_{0}^T \ell(y_t[u,y^0], u_t, \xi^s_t) \, \xd t  + h(y_T[u,y^0])\nonumber \\
    &+ \int_0^T {\rm i}_{\{0\}}\!\left(\dot{y}_t[u,y^0] - A_t y_t[u,y^0] - B_t u_t\right) \xd t\nonumber \\
    &+ \int_0^T {\rm i}_{\R_-^p}\!\left(C_t y_t[u,y^0] + D_t u_t + E_t\right) \xd t\nonumber \\
    &+ {\rm i}_{\{0\}}\!\left(F y_T[u,y^0] + G\right),\label{eq:def_f_for_equiv_proof}
\end{align}
where $y[u,y^0]$ denotes the mapping that associates a control $u$ with the corresponding state trajectory solving \cref{eq:dyn_const,eq:yT_const}. Since the dynamics in \cref{eq:dyn_const} are linear, this mapping is linear as well. Moreover, by \Cref{ass:prob_optim}, the function $\ell$ is convex with respect to its first two arguments. Consequently, $\varphi$ is proper, convex, and lower semicontinuous, as it is an integral of compositions of proper, convex, and lower semicontinuous functions with affine mappings. Using this definition of $\varphi$, it is straightforward to check that solving \Cref{prblm:general_socp} using the Variance-Penalized PHA consists of solving
$$
\min_{\vu \in \xLtwo([0,T];\R^m)}\E\left(\varphi(\vu, \vxi) \right) + \frac{\alpha}{2}\norm{\vu - \E(\vu)}^2_{\vxLtwo} + \ind_{\N}(\vu)
$$
Now, let us denote $\vzeta^k:=\proj{\N}(\vz^k)$, then \cref{eq:reg_pha_0}, for our problem becomes
\[
\vu^{k+1} \in \argmin_{\vu\in \vxLtwo} 
\sum_{s=1}^S \mu_s \left[ 
\varphi(u^s,\xi^s) + \sclr{(\lambda^k)^s, u^s}_{\xLtwo} + \frac{r}{2} \norm{u^s - (\zeta^k)^s}_{\xLtwo}^2
\right].
\]
This problem is separable across scenarios. For each scenario $s = 1,\dots,S$, we therefore solve
\[
\min_{u\in\xLtwo} 
\varphi(u, \xi^s) + \sclr{(\lambda^k)^s, u}_{\xLtwo} + \frac{r}{2} \norm{u - (\zeta^k)^s}_{\xLtwo}^2.
\]
Thus, using \Cref{definition:BR,eq:def_f_for_equiv_proof}, we have
$$
\vu^{k+1} = {\rm SOCP}(\vxi, \proj{\N}(\vz^k), \vlambda) ,
$$
which concludes the proof.
\end{proof}
 \subsection{Computing ${\rm SOCP}(\vxi, \vzeta, \vlambda)$}
 \label{sec:deterministic_ocp}
 Now, computing ${\rm SOCP}(\vxi, \vzeta, \vlambda)$ consists of solving $S$ deterministic optimal control problems as described in \Cref{definition:BR}. To do so, one needs an efficient optimal control problem solver able to handle pure-state, mixed, and control constraints. To do so, we use the primal-dual implementation of interior-point methods for optimal control problems described in \cite{malisani_interior_2023,malisani_interior_2024}. This primal-dual algorithm is well-suited to stochastic optimal control problems due to its numerical efficiency and ability to handle pure-state constraints, which are notoriously difficult. We have the following convergence result.
 \begin{lemma}
    \label{lmm:primal_dual}
     Let $(\epsilon_n)_n$ be a decreasing sequence of positive parameters converging to zero, and let $(\buen^s, \byen^s, \bpen^s, \bmuen^s,\bar{\eta}_{\epsilon_n}^s)$ be a solution of the following two-point boundary value problem
     \begin{subequations}
        \label{eq:tpbvp}
         \begin{align}
         \dot{y}_t & = A_t y_t + B_t u_t,\\
         \dot{p}_t & = -\ell'_y(y_t,u_t,\xi^s_t) - A_t^\top p_t - C_t^\top \mu_t,\\
         0_{\R^m} &= \ell'_u(y_t,u_t,\xi^s_t) + \lambda^s_t +r(u_t - \zeta^s_t)
         + B_t^\top p_t + D_t^\top \mu_t,\\
         0_{\R^p} & = {\rm FB}(\mu_t, C_t y_t + D_t u_t + E_t,\epsilon_n),\\
         0_{\R^n} & = y_0 - y^0,\\
         0_{\R^q} & = F y_T + G ,\\
         0_{\R^n} & = p_T - \nabla h(y_T) - F^\top \eta ,
     \end{align}
     \end{subequations}
    where ${\rm FB}(x,y,\epsilon) := x - y - \sqrt{x^2 + y^2 + 2 \epsilon}$. Then the sequence $(\buen^s)_n $ converges to $\bar{u}^s:={\rm SOCP}(\vxi, \vzeta, \vlambda)^s$.
 \end{lemma}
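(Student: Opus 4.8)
The plan is to read the two-point boundary value problem \cref{eq:tpbvp} as the first-order optimality system of the strictly convex deterministic optimal control problem \cref{eq:cost_pha_scen}, in which the exact complementarity between the inequality multiplier $\mu$ and the constraint residual $C_t y_t + D_t u_t + E_t$ has been replaced by the $\epsilon_n$-smoothed Fischer--Burmeister equation. First I would record that, by \Cref{ass:prob_optim}, $\ell$ is convex in $(y,u)$ and $h$ is convex, the proximal term $\tfrac{r}{2}\norm{u-\zeta^s}^2_{\xLtwo}$ is strictly convex, the term $\sclr{\lambda^s,u}_{\xLtwo}$ is affine, and the dynamics and constraints are affine; hence \cref{eq:cost_pha_scen} is strictly convex and admits a unique minimizer $\bar u^s$. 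Strict convexity supplies uniqueness of the control we must identify as the limit, and convexity makes the KKT/Pontryagin conditions — the adjoint equation for $p$, stationarity in $u$, complementarity for $\mu$, and the terminal transversality condition involving $\eta$ — both necessary and sufficient for optimality.

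Next I would exploit the structure of the smoothing. For $\epsilon=0$ one has ${\rm FB}(x,y,0)=x-y-\sqrt{x^2+y^2}=0$ if and only if $x\ge 0$, $y\le 0$, and $xy=0$, so \cref{eq:tpbvp} with $\epsilon_n>0$ is a smooth relaxation of the exact optimality system, and $\sqrt{x^2+y^2+2\epsilon}\to\sqrt{x^2+y^2}$ uniformly on bounded sets as $\epsilon\to 0$. The core of the argument is then compactness and passage to the limit: using the $\vxLinfty$-bound $\norm{\vu}_{\vxLinfty}\le R$ from \Cref{ass:prob_optim} together with the $\xLinfty$-boundedness of $A,B,C,D,E$, I would derive a priori bounds — uniform in $n$ — on the states $\byen^s$, the costates $\bpen^s$, and the multipliers $\bmuen^s$ and $\bar{\eta}_{\epsilon_n}^s$. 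From these bounds I would extract a convergent subsequence and pass to the limit in each line of \cref{eq:tpbvp}; the uniform convergence of the square root forces the limit to satisfy ${\rm FB}(\mu,Cy+Du+E,0)=0$, i.e. the exact complementarity.

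Finally, any subsequential limit thus solves the exact optimality system of \cref{eq:cost_pha_scen}, so sufficiency of the KKT conditions identifies it with the unique minimizer $\bar u^s={\rm SOCP}(\vxi,\vzeta,\vlambda)^s$ from \Cref{definition:BR}; since every subsequence has a further subsequence converging to this same unique limit, the full sequence $(\buen^s)_n$ converges, as claimed. In practice this convergence is exactly the guarantee established for the primal-dual interior-point scheme of \cite{malisani_interior_2023,malisani_interior_2024}, so the most economical route is to verify that \Cref{ass:prob_optim} supplies the hypotheses required there and then invoke that result. I expect the main obstacle to be the pure-state constraints — rows of \cref{eq:ineq_const} with $D_t=0$ — for which the limiting multiplier $\mu$ need not be an $\xLtwo$ function and is naturally a measure: obtaining a priori bounds on $\bmuen^s$ that survive the limit $\epsilon_n\to 0$, and interpreting the limiting complementarity in the appropriate sense, is the delicate step, and is precisely what the interior-point analysis of \cite{malisani_interior_2023,malisani_interior_2024} is built to handle.
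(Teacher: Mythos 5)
Your proposal is essentially correct, and its ``economical route'' is precisely the paper's proof: establish strict convexity of \cref{eq:cost_pha_scen} (convexity of $\ell$ and $h$ from \Cref{ass:prob_optim}, affine dynamics and constraints, plus the strictly convex proximal term $\frac{r}{2}\norm{u-\zeta^s}^2_{\xLtwo}$), deduce uniqueness of the minimizer, invoke \cite[Corollary 6.1]{malisani_interior_2024} to obtain convergence of the solutions of \cref{eq:tpbvp} to a point satisfying the exact first-order optimality conditions, and conclude by sufficiency of those conditions in the convex setting. The paper does \emph{not} carry out your primary plan (uniform a priori bounds on states, costates and multipliers, subsequence extraction, passage to the limit in the smoothed Fischer--Burmeister equation): all of that analysis is delegated wholesale to the cited corollary, exactly as you suggest at the end. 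That is the right call, since---as you yourself note---the limiting multiplier for pure-state constraints is generally a measure, and making the limit passage rigorous would amount to reproving the cited result.

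The one step you miss is the topology of convergence. The cited corollary yields convergence of $\buen^s$ only in the $\xLone$-topology, whereas ${\rm SOCP}(\vxi,\vzeta,\vlambda)^s$ is defined in $\xLtwo$ and the lemma is used downstream in an $\vxLtwo$ framework. The paper closes this gap with an interpolation argument based on the uniform bound $R$ of \Cref{ass:prob_optim}:
\begin{equation*}
\norm{\buen^s - \bar{u}^s}_{\xLtwo}^2 \leq \norm{\buen^s - \bar{u}^s}_{\xLinfty}\, \norm{\buen^s - \bar{u}^s}_{\xLone} \leq 2R \norm{\buen^s - \bar{u}^s}_{\xLone} \longrightarrow 0,
\end{equation*}
so the convergence also holds in $\xLtwo$. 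Your write-up never specifies in which norm your (sub)sequences converge, and without this step---or an equivalent one---the conclusion in the ambient $\xLtwo$ space does not follow.
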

 \begin{proof}
    First, the set
    $$
    \left\{u \in \xLtwo([0,T];\R^m): \textnormal{\cref{eq:dyn_const,eq:ineq_const,eq:y0_const,eq:yT_const}  hold} \right\},
    $$
    is convex since the dynamics from \cref{eq:dyn_const} are linear and the constraints from \cref{eq:ineq_const,eq:y0_const,eq:yT_const} are affine. Let $y[u,y^0]$ be the solution of \cref{eq:dyn_const,eq:y0_const}. From \Cref{ass:prob_optim}, the function $\ell$ is convex with respect to $(y,u)$, therefore, the mapping $u_t \mapsto \ell(y_t[u,y^0], u_t,\xi^s_t)$ is linear. Integration with respect to the time variable preserves the convexity, which proves that the mapping
     \begin{equation}
         \xLtwo([0,T];\R^m) \ni u \mapsto \int_0^T\ell(y_t[u,y^0], u_t,\xi^s_t)\xd t
         + \sclr{\lambda^s, u}_{\xLtwo} + \frac{r}{2}\norm{u - \zeta^s}_{\xLtwo}^2 \in \R,
     \end{equation}
     is strictly convex for all $r>0$. Thus, the deterministic optimal control problem for computing ${\rm SOCP}$ described in \cref{eq:cost_pha_scen} is strictly convex and thus has a unique optimal solution. In addition, from \cite[Corollary 6.1.]{malisani_interior_2024}, the sequence $(\buen^s, \byen^s, \bpen^s, \bmuen^s,\bar{\eta}_{\epsilon_n}^s)_n$ converges to a point $(\bar{u}^s, \bar{y}^s, \bar{p}^s, \bar{\mu}^s,\bar{\eta}^s)$ satisfying the first-order conditions of optimality. Using the uniqueness of the optimal solution of \cref{eq:cost_pha_scen}, necessarily $\bar{u}^s$ is the unique optimal solution. Now, from \cite[Corollary 6.1.]{malisani_interior_2024}, the convergence of $\buen^s$ is in the $\xLone$-topology. Now, using \Cref{ass:prob_optim}, we have
     \begin{multline}
         \lim_{n\rightarrow \infty}\norm{\buen^s - \bar{u}^s}_{\xLtwo}^2 \leq \lim_{n\rightarrow \infty}\norm{\buen^s - \bar{u}^s}_{\xLinfty} \norm{\buen^s - \bar{u}^s}_{\xLone}
         \\
         \leq \lim_{n\rightarrow \infty}2R\norm{\buen^s - \bar{u}^s}_{\xLone}=0, 
     \end{multline}
     which proves that the convergence of the optimal control, as required, also holds in the $\xLtwo$-topology, which concludes the proof.
 \end{proof}

\section{Reduced scenario tree generation}
\label{sec:scenario_gen_and_red}
\subsection{Scenario generation}
To conduct stochastic optimization, we must provide a sufficient number of scenarios to account for possible day-to-day variability. Using historical data from a building, we follow the method proposed by \cite{amabile_optimizing_2021} to generate plausible scenarios with respect to the underlying distribution of the measurements. The building mentioned above is a predominantly commercial three-story building located in the city of Solaize, France. The top two floors are offices, and the ground floor houses a small glass factory that operates occasionally.\\
First and foremost, if necessary, the available data are clustered into groups based on a priori criteria, such as seasonal or day specificity. Then, for each dataset group, the measurements are normalized to a maximum of 1 using a scaling factor equal to the peak value observed within the cluster. By definition, the minimum value is 0, since electrical production or consumption is always non-negative. \\
Then, for each dataset group and a given number of timesteps per hour (1, 2, or 6), we directly compute the quantiles from the ground truth measurements rather than relying on quantile regression forecasts, as in \cite{thorey_2018}. Thus, for a quantile level $\alpha \in [0,1]$ and a list of measurements at the timestamp $t\in[0, 24)$, $x_1^t, \ldots, x_n^t \in \mathbb{R}$, the $\alpha$ quantile is

\begin{equation*}
    Q_{\alpha}^t (x_1^t, \ldots, x_n^t) = x^t_{(\lceil n\alpha \rceil)}, 
\end{equation*}
with $x^t_{(i)}$ the $i$th order statistic of the list $(x_1^t, \ldots, x_n^t)$. In other words, the $\alpha$ quantile is the $\lceil n\alpha \rceil$-th smallest value of $x_1^t, \ldots, x_n^t$. Obtaining the $\alpha$ quantile for every possible timestamp yields quantile curves such as those in \Cref{fig:quantile_curves} for PV production and in \Cref{fig:quantile_conso} for the building's electrical consumption. 
\begin{figure}
    \centering
    \includegraphics[width=\columnwidth]{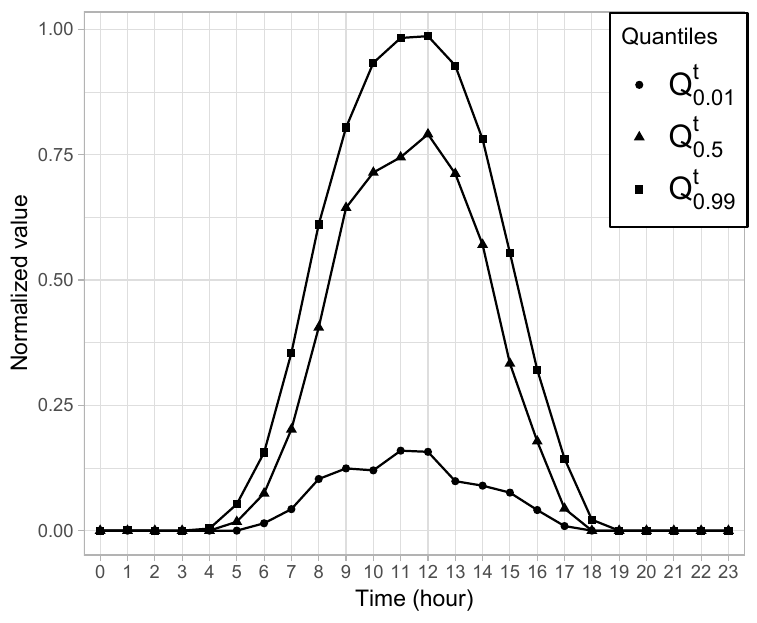}
    \caption{Quantiles curves obtained for $\alpha \in \{0.01, 0.5, 0.99\}$ for electrical production.}
    \label{fig:quantile_curves}
\end{figure}
\begin{figure}
    \centering
    \includegraphics[width=\columnwidth]{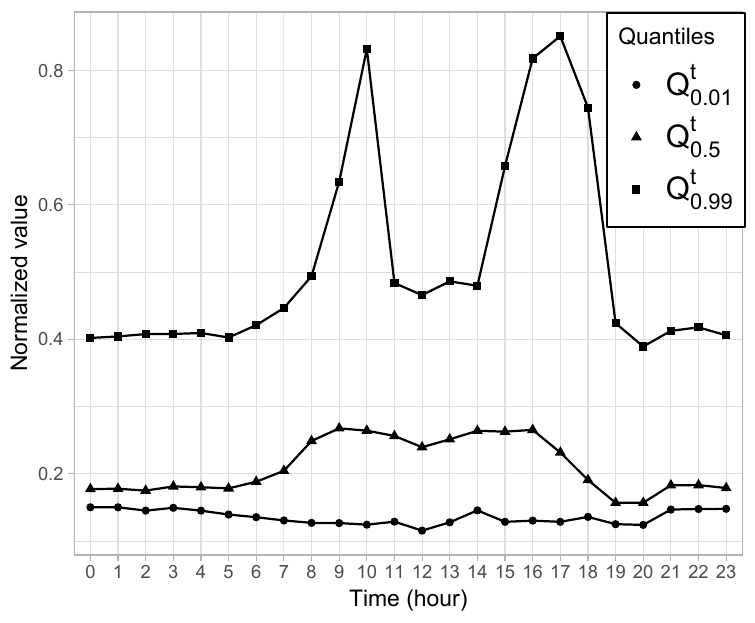}
    \caption{Quantiles curves obtained for $\alpha \in \{0.01, 0.5, 0.99\}$ for electrical consumption. The pics in consumption are due to the occasional operations of the building's glass factory.}
    \label{fig:quantile_conso}
\end{figure}
The lower and upper curves are the 0.01 and 0.99 quantile profiles, respectively. It means that only 1\% of the data is below the primer, and 99\% is above it at any timestep. We build 19 additional quantile profiles between 0.05 and 0.95, with a 0.05 increment, for a total of 21 curves. We can build an empirical cumulative distribution function using the different order quantiles.\\
Then, to generate a single scenario, we follow \cite{amabile_optimizing_2021} and, instead of drawing individual values from their respective cumulative distribution functions, we introduce correlation between consecutive timesteps. Assuming two random variables $\vx_k$ and $\vx_{k+1}$ of respective cumulative distribution $F_{k}$ and $F_{k+1}$, the following stochastic process is used to generate the scenarios: 

\begin{equation*}
    \begin{cases}
        \vx_{k+1} = F^{-1}_{k+1}(F(1-\alpha)F_k(\vx_k)+\alpha \vu_{k+1})\text{, for } k > 0 \\
        x_0 = 0
    \end{cases}
\end{equation*}
with $\alpha \in ]0,1[$, $\vu_{k+1} \sim \mathcal{U}(0,1)$ and $F$ a cumulative distribution function defined by
\begin{equation}
    F(x) = 
    \begin{cases}
       \frac{x^2}{2ab} & \text{if } 0 \leq x \leq a \\
        \frac{a}{2b} + \frac{x-a}{b} & \text{if } a \leq x \leq b \\
        \frac{a}{2b} + \frac{b-a}{b} + \frac{x - b - \frac{x^2 - b^2}{2}}{ab}& \text{if } b \leq x \leq 1
    \end{cases}
    \label{eq:cdf_F}
\end{equation}
with $a = \min (\alpha, 1 - \alpha)$ and $b = \max (\alpha, 1 - \alpha)$. \Cref{eq:cdf_F} is the cumulative distribution of a random variable defined as the following  weighted sum 
\begin{equation}
    {\bf W} = (1-\alpha)\vu_k + \alpha \vu_{k+1}
    \label{eq:W}
\end{equation}
with $\vu_k = F_k(\vx_k)$ and $\vu_{k+1}=F_{k+1}(\vx_{k+1}) \sim \mathcal{U}(0,1)$ by definition of the Probability Integral Transform. They use the property that $F_{k+1}^{-1}(F({\bf W}))$ has the same probability density function as $\vx_{k+1}$ but also encompasses a degree of correlation with $\vx_k$ by definition of \cref{eq:W}. This degree of correlation is directly affected by $\alpha$. \\
In our study, the parameter $\alpha$ is optimized within each cluster via a grid search to minimize the average prediction error across a portfolio of known scenarios, generating a reasonable number of trajectories.

\subsection{Scenario reduction}
To solve \Cref{prblm:general_socp} using the algorithm from \Cref{theorem:reg_pha}, one must make a trade-off between the number of scenarios and the numerical tractability of the problem, i.e., between the quality of the uncertainties representation and the numerical tractability. One way to achieve such a trade-off is to generate a large number of equiprobable scenarios, denoted $N_s$, and to derive from these $N_{\rm red} < N_s$ scenarios and their associated probabilities, such that the reduced set minimizes the Wasserstein distance to the original set of scenarios. We perform this task using the so-called fast-forward selection method from \cite[Algorithm 2.4]{heitsch_romisch}.

\section{Numerical example}
\label{sec:numerical_results}
\subsection{Stochastic optimal control of a stationary battery}
The problem we are interested in is the optimal control of a stationary battery connected downstream of a prosumer's meter, i.e., a customer with uncontrollable electrical production and consumption sources. The schematic diagram of such an installation is displayed in \Cref{fig:scheme_principle}. The stochastic optimal control problem consists of minimizing the following cost
\begin{equation}
    \label{eq:cost_battery_init}
    \inf_{\vQ,\vPb\in \vxLinfty\times \vxLtwo} \E\bigg[\int_0^T {\rm pr}_{\rm b}(t) \max\{\vPm(t),0\} 
    +  {\rm pr}_{\rm s}(t) \min\{\vPm(t),0\} \xd t\bigg]
\end{equation}
where ${\rm pr}_{\rm b}$ (resp. ${\rm pr}_{\rm s}$) is the buying (resp. selling) price of electricity satisfying $0 \leq {\rm pr}_{\rm s} (t)\leq {\rm pr}_{\rm b}(t)$ at all times, and $\vPm$ is the power measure at the meter. This power is defined as follows
\begin{equation}
    \label{eq:def_pm}
    \vPm := \cons - \pv +\frac{1}{\rho_c} \max\{\vPb,0\} + \rho_d \min\{\vPb,0\} 
\end{equation}
where $\cons$ (resp. $\pv$) is the uncontrollable electric consumption (resp. production), $\rho_c, \rho_d = 0.9$ are respectively the battery charge and discharge efficiencies. The battery's dynamics are as follows
\begin{equation}
    \label{eq:battery_dyn}
    \dot{\vQ}(t) = \vPb(t)
\end{equation}
The stochastic optimal control problem is solved under the following constraints
\begin{align}
    \vQ & \in \vxLinfty([0,T];[0,13])\\
    \vPb & \in \vxLtwo([0,T];[-8,8\rho_c])\\
    \vQ(0), \vQ(T) & = Q^0\label{eq:boundary_constraints}
\end{align}
At this point, due to the $\max$ and $\min$ functions in \cref{eq:cost_battery_init,eq:def_pm}, requirements from \Cref{ass:prob_optim} are not satisfied. To overcome this difficulty, these functions are replaced by their smooth approximations defined as follows
\begin{align*}
    {\rm max}_\mu(x,y) &:= \frac{1}{2} \left(x + y + \sqrt{(x - y)^2 + \mu} \right)\\
    {\rm min}_\mu(x, y) &:= \frac{1}{2} \left(x + y - \sqrt{(x - y)^2 + \mu} \right)
\end{align*}
and we set $\mu=10^{-5}$ to conduct all the computations. Finally, let us discuss the non-anticipativity constraint. The random processes $\cons$, $\pv$ are time-discrete periodic measures at the meter. Let $(t_0, t_1,\dots, t_N)$ be the time sequence of measures at meter satisfying $t_0:=0$, $t_N := T$ and, for all $k$, $\delta := t_{k+1} - t_k = $ 10 minutes. At time $t_k$, the value $\cons(t_k)$ (resp. $\pv(t_k)$) corresponds to the mean consumption (resp. production) power on the interval $[t_k, t_{k+1})$. Hence, $\cons(t_k)$ (resp. $\pv(t_k)$) is known at $t_{k+1} = t_k + \delta$. Therefore, the problem at hand belongs to the Decision-Hazard framework, and the non-anticipativity constraint writes
\begin{equation}
    \vPb \triangleleft_\delta \begin{pmatrix}
        \cons\\
        \pv
    \end{pmatrix}
\end{equation}

\begin{figure}[!t]
\centering
\includegraphics[width=\columnwidth]{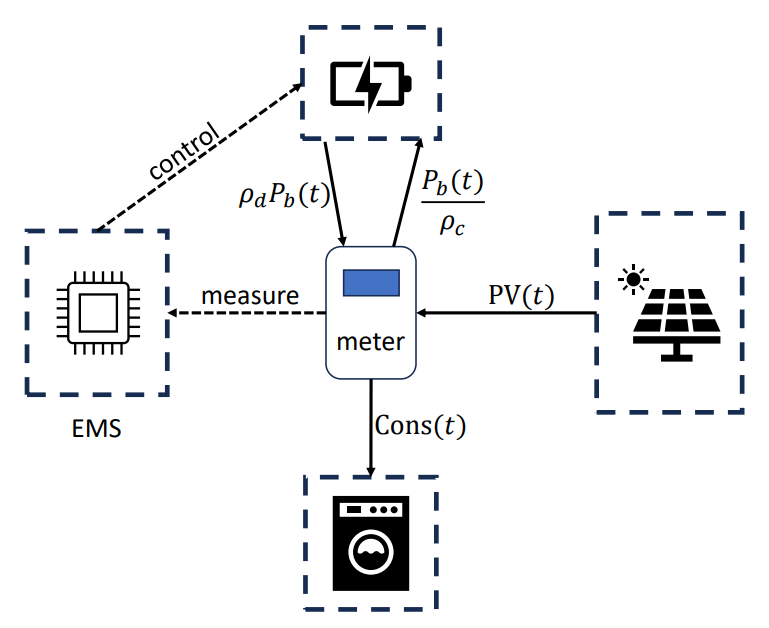}
\caption{schematic diagram of a domestic system with a stationary battery controlled by an EMS}
\label{fig:scheme_principle}
\end{figure}

\subsection{Rolling-horizon implementation}
In this section, we bring together, in a rolling-horizon framework, the VPPHA from \Cref{sec:reg_pha}, the scenario generation and scenario reduction methods from \Cref{sec:scenario_gen_and_red}. The control algorithm is described in \Cref{alg:alg_Planning}, where we denote $Q^{\rm meas},{\rm Cons}^{\rm meas}, {\rm PV}^{\rm meas}, P_b^{\rm meas}$ respectively the battery's state of energy, the electric consumption and photovoltaic production measured at the meter, and the battery charging power setpoint. These variables are all deterministic in the sense that they correspond to a particular realization of a stochastic process.
\begin{algorithm}[H]
\caption{vppha$(t_0,t_f,\delta,N_s, N_{\rm red}, H, \alpha)$}\label{alg:alg_Planning}
    \begin{algorithmic}
        \STATE $t\gets t_0$
        \WHILE {$t\leq t_f$}
            \STATE Measure $Q^{\rm meas}(t)$
            \STATE $r \gets {\rm modulus}(t-t_0, H)$
            \IF{$ r = 0$}
                \STATE $\cons_{t:t+24} \gets $gen$\_$scen$({\rm Cons}^{\rm meas}(t-\delta), N_s)$
                \vspace{5pt}
                \STATE $\overline{\cons}_{t:t+24} \gets $red$\_$scen$(\cons_{t:t+24}, N_{\rm red})$
                \vspace{5pt}
                \STATE $\pv_{t:t+24} \gets $gen$\_$scen$({\rm PV}^{\rm meas}(t-\delta), N_s)$
                \vspace{5pt}
                \STATE $\overline{\pv}_{t:t+24} \gets $red$\_$scen$(\pv_{t:t+24}, N_{\rm red})$
                \vspace{5pt}
                \STATE ${\vPb}_{t:t+24} \gets $VPPHA$(\alpha, \overline{\cons}_{t:t+24}, \overline{\pv}_{t:t+24}, Q^{\rm meas}(t))$
            \ENDIF
            \vspace{5pt}
            \STATE Compute $P_b^{\rm meas}(t)$ from $\vPb_{t-r:t-r+24}$, ${\rm Cons}^{\rm meas}(t-\delta)$, and ${\rm PV}^{\rm meas}(t - \delta)$
            \vspace{5pt}
            \STATE Measure ${\rm Cons}^{\rm meas}(t)$ and ${\rm PV}^{\rm meas}(t)$
            \vspace{5pt}
            \STATE $t \gets t + \delta$
    \ENDWHILE
    
    \STATE $P_m(t) := {\rm Cons}^{\rm meas}(t) - {\rm PV}^{\rm meas}(t) +\frac{1}{\rho_c} \max\{P_b^{\rm meas}(t), 0\}+ \rho_d \min\{P_b^{\rm meas}(t),0\} $
    \vspace{5pt}
    \STATE Bill $= \int_{t_0}^{t_f} {\rm pr}_{\rm b}(t) \max\{P_m(t),0\} +  {\rm pr}_{\rm s}(t) \min\{P_m(t),0\} \xd t$
    \vspace{5pt}
    \RETURN Bill
    \end{algorithmic}
\end{algorithm}

\subsection{Hyper parameter selection}
\Cref{alg:alg_Planning} requires to set 4 hyper-parameters, namely $\alpha, N_s, N_{\rm red},H$. The number of generated scenarios per random variable $N_s$ is set to $100$, and we set the rolling horizon to $H=24$ hours. We set $N_{\rm red}=15$, which yields a scenario tree with 225 branches. This number of scenarios is small enough to be numerically fast to solve and large enough to ensure the representativeness of the scenario tree. The last hyper-parameter $\alpha$ is determined by running \Cref{alg:alg_Planning} over 59 days, from 2024-05-04 to 2024-07-02, for different values of $\alpha$, and where ${\rm Cons}^{\rm meas}$ and ${\rm PV}^{\rm meas}$ are the ground truth measurements of electrical consumption and production. The buying price of electricity ${\rm pr}_{\rm b}$ is the day-ahead SPOT France, and the selling price ${\rm pr}_{\rm s}$ is set to 0. The performance of the proposed method is compared with a standard MPC strategy, which consists of setting $N_s=N_{\rm red}=1$, $\alpha=0$, and $H=0.5$ hour, i.e., only one scenario is generated, and the optimal control problem is solved every 30 minutes. Therefore the performance ratio denoted $\eta$ is defined as follows
\begin{equation}
\label{eq:def_performance_ratio}
    \eta(\alpha) :=
    100\left(\frac{\rho - {\rm vppha}(t_0,t_f,1/6,100, 15, 24, \alpha)}{\rho - {\rm vppha}(t_0,t_f,1/6, 1, 1, 0.5, 0)}-1\right)
\end{equation}
where $\rho$ is the reference bill defined as 
\begin{multline}
\label{eq:def_battery_less_bill}
    \rho := \int_{t_0}^{t_f}{\rm pr}_{\rm b}(t) \max\{{\rm Cons}^{\rm meas}(t)-{\rm PV}^{\rm meas}(t),0\} \\
    +  {\rm pr}_{\rm s}(t) \min\{{\rm Cons}^{\rm meas}(t)-{\rm PV}^{\rm meas}(t),0\} \xd t
\end{multline}
The results of these simulations are displayed on  \Cref{fig:hyp_param_24hrs}. One can see that the VPPHA with $\alpha>0$ always improves the performance ratio with respect to the standard PHA ($\alpha = 0$), and $\alpha=5$ seems to be the optimal value for the problem at hand.
\begin{figure}[!t]
\centering
\includegraphics[width=\columnwidth]{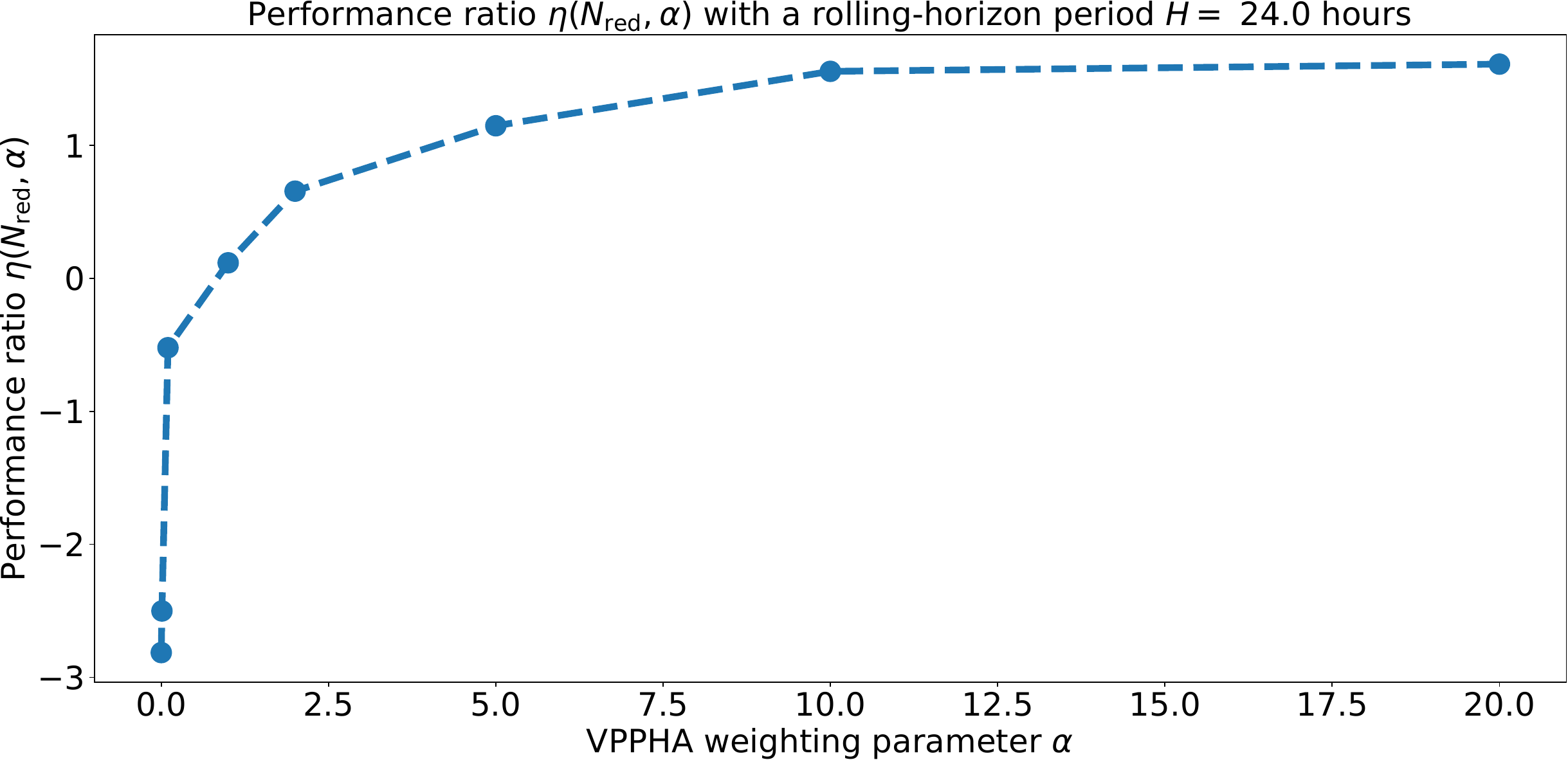}
\caption{Influence of the weighting parameter $\alpha$ on the performance ratio $\eta(\alpha)$ with an actualization period $H = 24$ hours and a scenario tree of 225-scenarios.}
\label{fig:hyp_param_24hrs}
\end{figure}

\subsection{Two years simulation}
Finally, we test and compare the performances of the VPPHA with a classical MPC strategy and the standard PHA over
two years ranging from 2022-01-22 to 2024-01-22. The parameterization of these different control strategies is displayed in \Cref{tab:mpc_pha_rpha}. In \Cref{fig:perf_ratio_two_years}, we compare the evolution of the performance ratio defined in \Cref{eq:def_performance_ratio} for the standard PHA and the VPPHA. This figure illustrates the standard PHA's lack of robustness. Indeed, the associated performance ratio converges to a negative value, i.e., it is less efficient than a classical MPC control strategy. On the contrary, the proposed VPPHA is more performant than the MPC strategy. Interestingly, one can notice an increase (resp. decrease) in efficiency for the VPPHA (resp. standard PHA) during the summer of 2022. During this period, SPOT electricity prices in France were unusually high due to limited availability of French nuclear power plants and high gas prices following the Russian invasion of Ukraine. Thus, an efficient control strategy must be risk-averse to avoid unnecessary, highly priced electricity consumption. From this point of view, the proposed VPPHA strategy is indeed more risk-averse than the standard PHA and also improves the performance of the EMS compared to the MPC strategy. Indeed, in \Cref{fig:bill_red_two_years}, we compare the electricity bill reduction provided by each control strategy compared to the battery-less electricity bill $\rho$ defined in \cref{eq:def_battery_less_bill}. At the end of the simulation, the MPC strategy allows for an electricity bill reduction of 7.30\%, the standard PHA allows for a bill reduction of 7.13\%, and the VPPHA allows for a bill reduction of 7.95\%. Therefore, the VPPHA strategy allows for a 0.65\% additional bill reduction compared with the standard MPC strategy while only requiring the resolution of a complex optimal control problem every 24 hours. In the meantime, the standard PHA performs less efficiently than the MPC.

\begin{table}[h]
    \begin{tabular}{r|c |c|c|c|c}
            Control Strategy & $\delta$ (hrs)& $H$ (hrs) & $N_s$ & $N_{\rm red}$ & $\alpha$ \\
        \hline
         MPC&  $1/6$ & 0.5   & 1   & 1 & 0\\
         Standard PHA&  $1/6$ & 24   & 100   & 15 & 0 \\
         VPPHA &  $1/6$ & 24   & 100   & 15 & 5
    \end{tabular}
    \caption{Control strategies hyper-parameters selection}
    \label{tab:mpc_pha_rpha}
\end{table}

\begin{figure}[!t]
\centering
\includegraphics[width=\columnwidth]{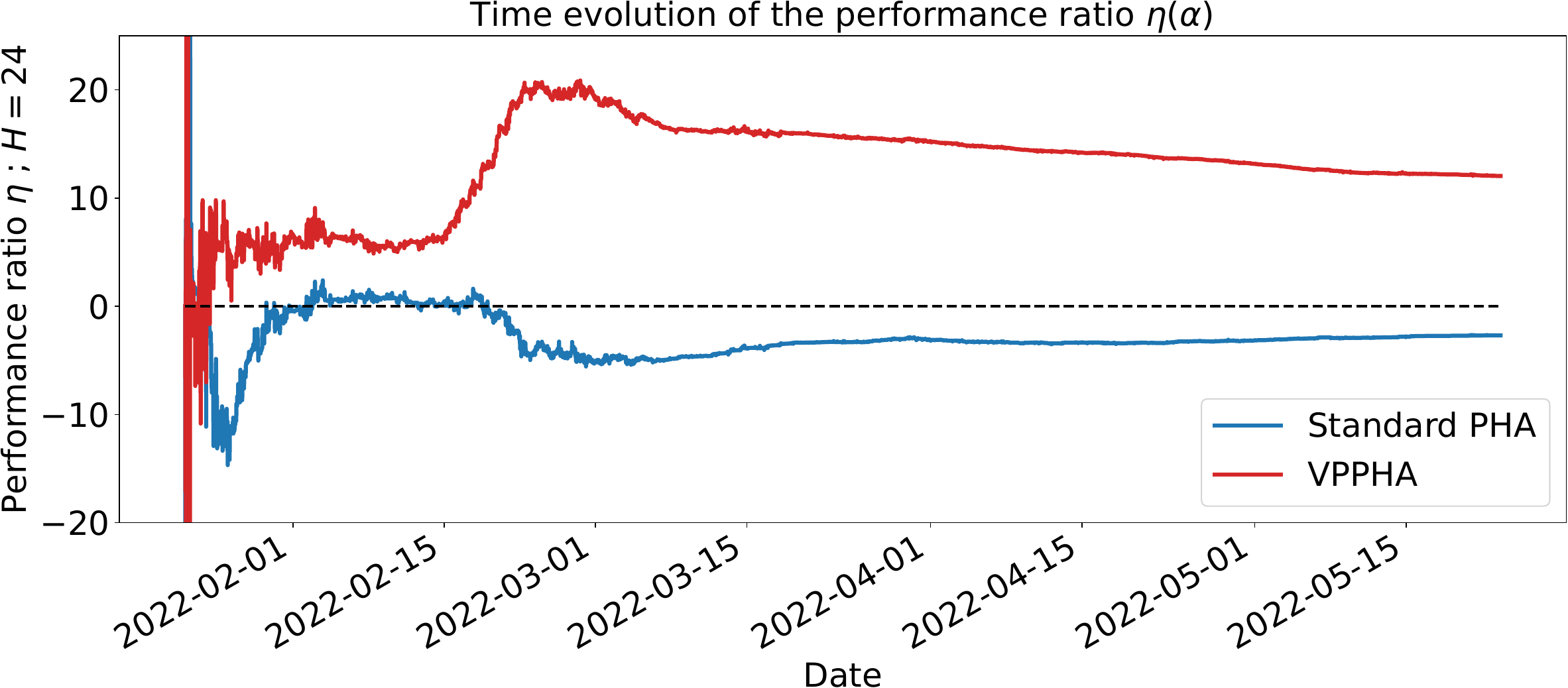}
\caption{time-evolution of the performance ratio $\eta(\alpha)$ from the 2022-01-22 to the 2024-01-22.}
\label{fig:perf_ratio_two_years}
\end{figure}

\begin{figure}[!t]
\centering
\includegraphics[width=\columnwidth]{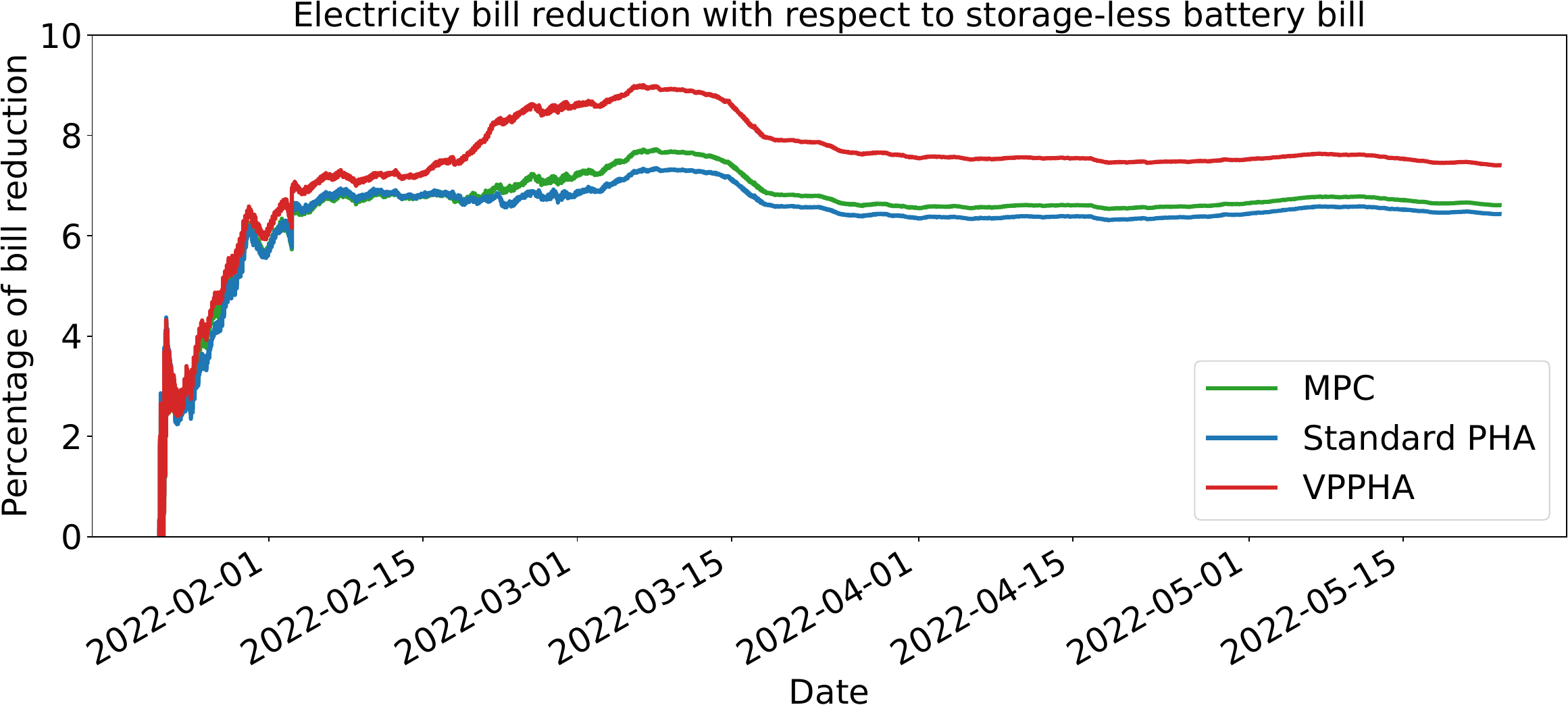}
\caption{Time-evolution of the percentage of electricity bill reduction from the 2022-01-22 to the 2024-01-22.}
\label{fig:bill_red_two_years}
\end{figure}

\section{Conclusion}
This article proposes a variance-regularized PHA, called VPPHA. This VPPHA has the same numerical complexity as the standard PHA but exhibits better out-of-sample performances. In addition, we have shown on actual data from an industrial site that the proposed framework, consisting of scenario generation, scenario reduction, and VPPHA, performs better than the standard PHA and a classical MPC strategy, making it a strong candidate for actual implementation in an EMS.

\section*{Declarations}
\subsection*{Conflict of Interest} 
Authors have no relevant financial or non-financial interests to disclose.
\subsection*{Data availability}
All experimental data are available at \url{10.5281/zenodo.17314269                     }.
\subsection*{Author contribution}
P.M. developed the optimization model and solving algorithm, and wrote Sections 1-4. A.S. developed the scenario generation method and wrote Section 5. All authors contributed equally to numerical experimentation and the writing of Section 6.

\bibliography{bibliography}

\end{document}